\newcommand{\Rm}[1]{\mathop{\mathrm{#1}}\nolimits}
\newcommand{\CC}{\mathcal{C}}
\newcommand{\FF}{\mathbb{F}}
\newcommand{\0}{\mathbf{0}}
\newcommand{\1}{\mathbf{1}}
\DeclareMathOperator{\wt}{wt}
\DeclarePairedDelimiter\floor{\lfloor}{\rfloor}
\newtheorem{thm}{Theorem}
\newtheorem{lem}{Lemma}
\newtheorem{prp}{Proposition}
\title{Classification of optimal quaternary Hermitian LCD codes of dimension $2$}
\author{Keita Ishizuka}
\date{}
\begin{document}
\maketitle

\begin{abstract}
Hermitian linear complementary dual codes are linear codes whose intersection with their Hermitian dual code is trivial.
The largest minimum weight among quaternary Hermitian linear complementary dual codes of dimension $2$ is known for each length. We give the complete classification of optimal quaternary Hermitian linear complementary dual codes of dimension $2$.
\end{abstract}

\section{Introduction}
\label{sec:introduction}

Let $\FF_4 \coloneqq \{ 0, 1, \omega, \omega^2 \}$ be the finite field of order four, where $\omega$ satisfies $\omega^2 + \omega + 1 = 0$. The conjugate of $x \in \FF_4$ is defined as $\overline{x} \coloneqq x^2$. A quaternary $[n, k, d]$ code is a linear subspace of $\FF_4^n$ with dimension $k$ and minimum weight $d$. Throughout this paper, we consider only linear quaternary codes and omit the term ``linear quaternary''.
Given a code $C$, a vector $c \in C$ is said to be a codeword of $C$. The weight of a codeword $c$ is denoted by $\wt(c)$. 

Let $u \coloneqq (u_1, u_2, \ldots, u_n),\ v \coloneqq (v_1, v_2, \ldots, v_n)$ be vectors of $\FF_4^n$. The Hermitian inner product is defined as $(u, v)_h \coloneqq u_1\overline{v_1} + u_2\overline{v_2} + \cdots + u_n\overline{v_n}$. Given a code $C$, the Hermitian dual code of $C$ is $C^{\perp h} \coloneqq \{ x \in \FF_4^n \mid (x, y)_h = 0 \text{ for all } y \in C \}$.
A generator matrix of the code $C$ is any matrix whose rows form a basis of $C$. Moreover, a generator matrix of the Hermitian dual code $C^{\perp h}$ is said to be a parity check matrix of $C$. Given a matrix $G$, we denote the transpose of $G$ by $G^T$ and the conjugate of $G$ by $\overline{G}$. 
Hermitian linear complementary dual codes, Hermitian LCD codes for short, are codes whose intersection with their Hermitian dual code is trivial. The concept of LCD codes was invented by Massey~\cite{massey1992linear} in 1992. LCD codes have been applied in data storage, communication systems and cryptography. For example, it is known that LCD codes can be used against side-channel attacks and fault injection attacks~\cite{carlet2016complementary}. We note that a code $C$ is a Hermitian LCD code if and only if its generator matrix $G$ satisfies $\det G\overline{G}^T \neq 0$~\cite{guneri2016quasi}.

Two codes $C, C'$ are equivalent if one can be obtained from the other by a permutation of the coordinates and a multiplication of any coordinate by a nonzero scalar. We denote the equivalence of two codes $C, C'$ by $C \simeq C'$.
Let $G, G'$ be generator matrices of two codes $C, C'$ respectively. It is known that $C \simeq C'$ if and only if $G$ can be obtained from $G'$ by an elementary row operation, a permutation of the columns and multiplication of any column by a nonzero scalar.

It was shown in~\cite{lu2015maximal} that the upper bound of the minimum weight of the Hermitian LCD $[n, 2, d]$ codes is given as follows:
\begin{equation}
    \label{eq:optDist}
    d \le \begin{cases}
        \floor*{\frac{4n}{5}}
        & \Rm{if\ } n \equiv 1, 2, 3 \pmod 5, \\
        \floor*{\frac{4n}{5}} - 1
        & \Rm{otherwise.}
    \end{cases}
\end{equation}
Also, it was proved that for all $n \neq 1$, there exists a Hermitian LCD $[n, 2, d]$ code which meets this upper bound.
We say that a Hermitian LCD $[n, 2, d]$ code is optimal if it meets this upper bound.
It was shown in~\cite{carlet2018linearcodes} that any code over $\FF_{q^2}$ is equivalent to some Hermitian LCD code for $q \ge 3$. Furthermore, it was proved in~\cite{lu2015maximal} that a Hermitian LCD code constructs a maximal-entanglement entanglement-assisted quantum error correcting code. Motivated by the results, we are concerned with the complete classification of optimal Hermitian LCD codes of dimension $2$.

This paper is organized as follows. In Section~\ref{sec:classification}, we present a method to construct optimal Hermitian LCD codes of dimension $2$, including all inequivalent codes. Also, a method to classify optimal Hermitian LCD codes of dimension $2$ is given. In Section~\ref{sec:optimal}, we classify optimal Hermitian LCD codes of dimension $2$. Up to equivalence, the complete classification of optimal Hermitian LCD codes of dimension $2$ is given. It is shown that all inequivalent codes have distinct weight enumerators, which is used for the classification.

\section{Classification Method}
\label{sec:classification}

Let $\0_n$ be the zero vector of length $n$ and $\1_n$ be the all-ones vector of length $n$. Let $(a_0, a_1, a_2, a_3, a_4, a_5) \neq (0, 0, 0, 0, 0, 0)$ be a tuple of nonnegative integers. We introduce the following notation:
\begin{align*}
G(a_0, a_1, a_2, a_3, a_4, a_5) \coloneqq
    \begin{pmatrix}
        1 & 0 & \0_{a_0} & \0_{a_1} & \1_{a_2} & \1_{a_3} & \1_{a_4} & \1_{a_5} \\
        0 & 1 & \0_{a_0} & \1_{a_1} & \0_{a_2} & \1_{a_3} & \omega \1_{a_4} & \omega^2 \1_{a_5}
    \end{pmatrix}.
\end{align*}
We denote by $C(a_0, a_1, a_2, a_3, a_4, a_5)$ the code whose generator matrix is $G(a_0, a_1, a_2, a_3, a_4, a_5)$. By the same argument as in~\cite{araya2019onthe}, we obtain the following lemma.

\begin{lem}
    \label{lem:distanceOne}
    Given a code $C$, define $C^{*} \coloneqq \{ (x, 0) \mid x \in C \}$. Let $\CC^{*}_{n, k}$ denote the set of all inequivalent Hermitian LCD $[n, k]$ codes C such that the minimum weight of $C^{\perp h}$ is $1$. Then there exists a set $\CC_{n-1, k}$ of all inequivalent Hermitian LCD $[n-1, k]$ codes such that $\CC^{*}_{n, k} = \{ C^{*} \mid C \in \CC_{n-1, k} \}$.
\end{lem}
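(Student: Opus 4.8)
The plan is to reduce the statement to a bijection between equivalence classes, built from the single observation that having dual minimum weight equal to $1$ is the same as having a coordinate that is identically zero. First I would record this characterization: a vector of weight one has the form $a\bm{e}_i$ with $a \neq 0$ and $\bm{e}_i$ the $i$-th standard basis vector, and $a\bm{e}_i \in C^{\perp h}$ holds exactly when $a\,\overline{c_i} = 0$ for every $c \in C$, i.e.\ when the $i$-th coordinate vanishes on all of $C$. Hence $C^{\perp h}$ has minimum weight $1$ if and only if $C$ possesses such a zero coordinate; permuting that coordinate to the end shows $C \simeq D^{*}$ for the code $D$ obtained by deleting it, which has length $n-1$.

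Next I would verify that the operation $D \mapsto D^{*}$ preserves every property in sight. If $G$ is a generator matrix of $D$, then $(G \mid \0)$ generates $D^{*}$, the appended zero column leaves the rank unchanged, and $(G \mid \0)\overline{(G \mid \0)}^{T} = G\overline{G}^{T}$. By the criterion $\det G\overline{G}^{T} \neq 0$, the code $D^{*}$ is Hermitian LCD precisely when $D$ is, and it has the same dimension $k$. Since $D^{*}$ visibly has a zero coordinate, it is a Hermitian LCD $[n,k]$ code whose dual minimum weight is $1$. Combined with the previous paragraph, this shows that $[D] \mapsto [D^{*}]$ is a well-defined, surjective map from equivalence classes of Hermitian LCD $[n-1,k]$ codes onto equivalence classes of Hermitian LCD $[n,k]$ codes with dual minimum weight $1$.

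The crux is injectivity: I must show $D^{*} \simeq (D')^{*}$ forces $D \simeq D'$, and I expect this to be the main obstacle because an equivalence between $D^{*}$ and $(D')^{*}$ need not fix the appended coordinate. The key facts I would use are that such an equivalence, being built from a coordinate permutation and nonzero scalings, sends zero coordinates to zero coordinates and support coordinates to support coordinates, so the appended zero coordinate of $D^{*}$ is carried to some zero coordinate $j$ of $(D')^{*}$. Restricting the equivalence to the remaining coordinates identifies $D$ with the code obtained from $(D')^{*}$ by deleting coordinate $j$. When $j$ is the appended coordinate this code is literally $D'$; when $j$ is a zero coordinate inherited from $D'$, deleting it merely relabels which zero coordinate survives, which is a coordinate permutation, so the result is still equivalent to $D'$. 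Either way $D \simeq D'$.

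Finally I would assemble the sets. Choosing for each class a representative whose last coordinate is zero---always possible by the first paragraph---lets me take $\CC_{n-1,k} \coloneqq \{\, C \text{ with its last coordinate deleted} \mid C \in \CC^{*}_{n,k} \,\}$; the bijection just established guarantees that this is a complete and irredundant set of representatives of the inequivalent Hermitian LCD $[n-1,k]$ codes and that $\CC^{*}_{n,k} = \{ C^{*} \mid C \in \CC_{n-1,k} \}$.
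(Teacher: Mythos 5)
Your proposal is correct, and it is essentially the argument the paper relies on: the paper gives no proof of this lemma, instead invoking ``the same argument as in'' the cited work of Araya and Harada, which is exactly your chain of observations (dual minimum weight $1$ $\Leftrightarrow$ an identically zero coordinate, the Gram-matrix identity $(G \mid \0)\overline{(G \mid \0)}^{T} = G\overline{G}^{T}$ showing LCD-ness is preserved, and the induced bijection on equivalence classes). Your write-up supplies the details—including the injectivity case analysis on where the appended zero coordinate is sent—that the paper leaves to the reference.
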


We assume $a_0 = 0$ by Lemma~\ref{lem:distanceOne} and omit $a_0$. Furthermore, throughout this paper, we use the following notations:
\begin{equation}
    \label{eq:bolda}
    G(\bm{a}) \coloneqq G(a_1, a_2, a_3, a_4, a_5),\ C(\bm{a}) \coloneqq C(a_1, a_2, a_3, a_4, a_5),
\end{equation}
respectively, to save space.

\begin{prp}
    \label{prp:Cexists}
    Let $C$ be an $[n, 2, d]$ code. Then there exist nonnegative integers $a_1, a_2, a_3, a_4, a_5$ such that $C \simeq C(\bm{a})$ and $1 + a_2 + a_3 + a_4 + a_5 = d$.
\end{prp}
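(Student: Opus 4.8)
The plan is to read off the column structure of an arbitrary generator matrix and then transport it into the shape of $G(\bm a)$ by an equivalence. Fix a generator matrix $G$ of $C$; since $\dim C = 2$, $G$ is a $2 \times n$ matrix of rank $2$. By the standing convention $a_0 = 0$ (justified by Lemma~\ref{lem:distanceOne}) we may assume $C$ has no zero coordinate, so every column of $G$ is a nonzero vector of $\FF_4^2$. Up to multiplication by a nonzero scalar---which is permitted in an equivalence---every nonzero vector of $\FF_4^2$ is one of the five representatives $(1,0)^T, (0,1)^T, (1,1)^T, (1,\omega)^T, (1,\omega^2)^T$, i.e.\ the five points of the projective line over $\FF_4$. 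First I would rescale each column to one of these five representatives and then permute the columns (again an equivalence) so that equal columns are grouped together, recording the multiplicities $m_1, \dots, m_5$ of the five types; note $\sum_i m_i = n$, and since $\rank G = 2$ the columns span $\FF_4^2$, so at least two of the $m_i$ are positive.

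The key computation is the minimum weight in terms of the $m_i$. A codeword is $(\alpha, \beta) G$ for $(\alpha, \beta) \in \FF_4^2$, and on a column with representative $v$ its entry equals the ordinary bilinear pairing of $(\alpha,\beta)$ with $v$. For a fixed nonzero $(\alpha,\beta)$, the vectors $v$ annihilated by this pairing form a $1$-dimensional subspace of $\FF_4^2$, hence a single projective point; thus exactly one of the five column types is killed and the remaining four are nonzero on the whole codeword. Consequently $\wt((\alpha,\beta)G) = n - m_j$, where $j$ is the type annihilated by $(\alpha,\beta)$, and as $(\alpha,\beta)$ ranges over the nonzero vectors $j$ ranges over all five types. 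Therefore $d = n - \max_i m_i$.

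Finally I would normalize the labelling of the types by a single row operation. Because $\mathrm{GL}(2,\FF_4)$ acts transitively on ordered bases of $\FF_4^2$, it acts $2$-transitively on the five projective points; hence there is $A \in \mathrm{GL}(2,\FF_4)$ sending a type of maximal multiplicity to $(0,1)^T$ and some other positive-multiplicity type to $(1,0)^T$ (the remaining three types then land on $(1,1)^T, (1,\omega)^T, (1,\omega^2)^T$ in some order). Replacing $G$ by $AG$ (a sequence of elementary row operations), then rescaling and permuting columns, is an equivalence bringing $G$ to the form $G(\bm a)$: one $(1,0)^T$ column and one $(0,1)^T$ column form the leading identity block, and $a_1 = m_{(0,1)} - 1$, $a_2 = m_{(1,0)} - 1$, $a_3 = m_{(1,1)}$, $a_4 = m_{(1,\omega)}$, $a_5 = m_{(1,\omega^2)}$ are all nonnegative (the first two because the two chosen types have multiplicity at least $1$). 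Since the type placed at $(0,1)^T$ had maximal multiplicity, $1 + a_1 = \max_i m_i$, and the weight formula gives $d = n - (1+a_1) = 1 + a_2 + a_3 + a_4 + a_5$, as claimed.

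I expect the main obstacle to be the two structural facts underlying the argument: that exactly one of the five linear forms vanishes on each nonzero codeword (which collapses the weight to a single multiplicity and yields $d = n - \max_i m_i$), and that the relabelling of types placing the maximum at the $(0,1)^T$ slot is realized by an \emph{honest} equivalence rather than merely an abstract permutation. The rank-$2$ hypothesis is what guarantees a second positive multiplicity, so that the $(1,0)^T$ block, and hence the identity block, can be formed with $a_2 \ge 0$.
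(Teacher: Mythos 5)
Your proof is correct, and it rests on the same underlying decomposition as the paper's: columns of a generator matrix, taken up to nonzero scalars, are points of the projective line over $\FF_4$, and the passage to $G(\bm{a})$ is realized by column scalings, column permutations, and row operations. Where you differ is in how the condition $1 + a_2 + a_3 + a_4 + a_5 = d$ is secured. The paper does this \emph{before} normalizing: it chooses the basis of $C$ so that the first row of $G$ is a codeword of weight $d$; since the first row of $G(\bm{a})$ has weight exactly $1 + a_2 + a_3 + a_4 + a_5$, the condition is then automatic, with no weight computation and no group theory. You normalize first and relabel afterwards, which forces you to prove the weight formula $d = n - \max_i m_i$ and to invoke $2$-transitivity of $\mathrm{GL}(2,\FF_4)$ on the five projective points to move a maximal-multiplicity type into the $(0,1)^T$ slot; both steps are valid ($\mathrm{PGL}(2,\FF_4)$ is even sharply $3$-transitive there), so your route goes through, just at greater length. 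What your version buys is explicitness: the identity $d = n - \max_i m_i$, the observation that rank $2$ guarantees a second positive multiplicity (hence $a_2 \ge 0$, i.e.\ the identity block can actually be formed), and the explicit appeal to the standing assumption $a_0 = 0$ are precisely the details the paper's two-line proof leaves implicit---indeed the paper's phrase ``multiplying rows by some non-zero scalars'' is loose, since one also needs row operations to create the $(1,0)^T$ and $(0,1)^T$ columns, a point your argument handles correctly.
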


\begin{proof}
    Let $G$ be a generator matrix of the code $C$.
    By multiplying rows by some non-zero scalars, $G$ is changed to a generator matrix which consists only of the columns of $G(\bm{a})$.
    Permuting the columns, $G(\bm{a})$ is obtained from $G$. Hence it holds that $C \simeq C(\bm{a})$.
    Since the minimum weight of $C$ is $d$, we may assume that the first row of $G$ is a codeword with weight $d$, which yields $1 + a_2 + a_3 + a_4 + a_5 = d$. 
 \end{proof}

Given a code $C(\bm{a})$, we may assume without loss of generality that $G(\bm{a})$ satisfies
\begin{equation}
\label{eq:assumption}
1 + a_2 + a_3 + a_4 + a_5 = d,
\end{equation}
by Proposition~\ref{prp:Cexists}. This assumption on a generator matrix reduces computations later.

\begin{lem}
    \label{lem:sufnec}
    Let $C$ be an $[n, 2, d]$ code $C(\bm{a})$. Then $C$ is a Hermitian LCD code if and only if $C$ satisfies the following conditions:
    \begin{align}
        &a_1 = n - d - 1, \label{sufnec:a_1} \\
        &a_2 \leq n - d - 1, \label{sufnec:a_2} \\
        &a_3, a_4, a_5 \leq n - d, \label{sufnec:a_3a_4a_5} \\
        &\begin{cases}
            a_3 + a_4 + a_5 + a_3a_4 + a_4a_5 + a_5a_3 \not \equiv 0 \pmod 2 & \Rm{if} d \Rm{is} \Rm{even,} \\
            a_3a_4 + a_4a_5 + a_5a_3 \not \equiv n - d \pmod 2 & \Rm{otherwise.}
        \end{cases} \label{sufnec:LCD}
    \end{align}
\end{lem}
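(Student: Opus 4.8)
The plan is to characterize the Hermitian LCD property via the determinant criterion stated in the introduction, namely that $C(\bm a)$ is Hermitian LCD if and only if $\det\bigl(G\overline G^T\bigr)\neq 0$, and then to compute this $2\times 2$ determinant explicitly in terms of the block sizes $a_1,\dots,a_5$. First I would set $M\coloneqq G(\bm a)\overline{G(\bm a)}^T$, which is a $2\times 2$ matrix over $\FF_4$ whose entries are Hermitian inner products of the two rows of $G(\bm a)$ with themselves and with each other. Reading off the generator matrix block by block, the diagonal entries $M_{11}=(r_1,r_1)_h$ and $M_{22}=(r_2,r_2)_h$ count (modulo the characteristic) the weights of the two rows, since $x\overline x=x^3\in\{0,1\}$ for $x\in\FF_4$, while the off-diagonal entry $M_{12}=(r_1,r_2)_h$ picks up contributions only from the columns where both rows are nonzero, i.e. the blocks indexed by $a_3,a_4,a_5$, weighted by the conjugates $\overline 1=1$, $\overline\omega=\omega^2$, $\overline{\omega^2}=\omega$. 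The off-diagonal entry is therefore a sum of the form $a_3\cdot 1+a_4\cdot\overline\omega+a_5\cdot\overline{\omega^2}$ (up to the exact convention), and $M_{21}=\overline{M_{12}}$.

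Once these four entries are in hand, the condition $\det M\neq 0$ becomes $M_{11}M_{22}-M_{12}\overline{M_{12}}\neq 0$, an equation over $\FF_4$. The natural next step is to separate this into its behavior over $\FF_2$: since $M_{11},M_{22}$ lie in the prime subfield $\FF_2$ (being sums of $0$'s and $1$'s), and since $M_{12}\overline{M_{12}}=M_{12}^{3}\in\FF_2$ as well, the whole determinant lives in $\FF_2$, so the LCD condition reduces to a single parity condition. I would compute $M_{11}\bmod 2$ and $M_{22}\bmod 2$ from the row weights, use the assumption~\eqref{eq:assumption} that $1+a_2+a_3+a_4+a_5=d$ to rewrite $M_{11}$ in terms of $d$, and expand $M_{12}\overline{M_{12}}$ using $\omega\cdot\overline\omega=\omega^3=1$ and the relation $\omega^2+\omega+1=0$; the cross terms will produce the symmetric expression $a_3a_4+a_4a_5+a_5a_3$ together with the linear terms $a_3+a_4+a_5$, which is exactly the shape appearing in~\eqref{sufnec:LCD}.

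Next I would handle the remaining conditions~\eqref{sufnec:a_1}--\eqref{sufnec:a_3a_4a_5}, which encode that $C(\bm a)$ genuinely has length $n$, dimension $2$, and minimum weight $d$ rather than something smaller. Condition~\eqref{sufnec:a_1} is just the bookkeeping identity $n=2+a_1+a_2+a_3+a_4+a_5$ combined with~\eqref{eq:assumption}, which forces $a_1=n-d-1$. The inequalities~\eqref{sufnec:a_2} and~\eqref{sufnec:a_3a_4a_5} come from requiring that no other codeword — in particular the second row $r_2$ and the sums $r_1+r_2$, $r_1+\omega r_2$, $r_1+\omega^2 r_2$ — has weight smaller than $d$; writing out the weight of each such combination in terms of the block sizes and imposing $\wt\ge d$ yields these bounds, with the asymmetry between $a_2$ and $a_3,a_4,a_5$ reflecting whether the relevant combination zeroes out a $\1$-block or not. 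The split into $d$ even versus $d$ odd in~\eqref{sufnec:LCD} arises because the parity of $M_{11}M_{22}$ depends on $d\bmod 2$ through~\eqref{eq:assumption}, and I would carry both cases in parallel to land on the stated congruences.

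The main obstacle I anticipate is not the determinant computation itself, which is mechanical, but rather proving the equivalence in the direction that these conditions are \emph{sufficient} for the minimum weight to be exactly $d$: the determinant criterion only delivers the LCD property, whereas~\eqref{sufnec:a_2}--\eqref{sufnec:a_3a_4a_5} must be shown to be precisely the inequalities guaranteeing minimum weight $d$, so I need to enumerate the weights of all $q^2-1=15$ nonzero scalar combinations $\alpha r_1+\beta r_2$ of the two rows and verify that the minimum over these equals $d$ exactly when the inequalities hold. Organizing this case analysis cleanly — exploiting the symmetry among the three blocks $a_3,a_4,a_5$ under the Galois/scalar action so as not to check all fifteen combinations by brute force — will be the delicate part, and I expect the even/odd dichotomy in the determinant condition to interact with these weight bounds in a way that requires care to state uniformly.
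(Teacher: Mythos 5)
Your proposal follows essentially the same route as the paper's proof: the determinant criterion $\det(G\overline{G}^T) \neq 0$, computed block-by-block via the Hermitian inner products of the two rows and reduced modulo $2$, yields~\eqref{sufnec:LCD}, while the column-count identity plus the weight bounds on $r_2$, $r_1+r_2$, $r_1+\omega r_2$, $r_1+\omega^2 r_2$ yield~\eqref{sufnec:a_1}--\eqref{sufnec:a_3a_4a_5}. The obstacle you anticipate at the end is not a real one: scalar multiples of a codeword have equal weight, so the fifteen nonzero codewords fall into the five classes you already listed, and checking those five (as the paper does) is the whole case analysis.
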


\begin{proof}
    Suppose $C$ is a Hermitian LCD $[n, 2, d]$ code.
    Let $G$ be a generator matrix of the code $C$. Let $r_1, r_2$ be the first and second rows of $G$ respectively.
    The number of columns of $G$ equals to the length $n$. Thus, it holds that
    \begin{equation}
        \label{eq:colEqLength}
        2 + a_1 + a_2 + a_3 + a_4 + a_5 = n.
    \end{equation}
    Since the minimum weight of $C$ is $d$, the following holds: $\wt(r_2) \ge d,\ \wt(r_1 + r_2) \ge d,\ \wt(r_1 + \omega r_2) \ge d,\ \wt(r_1 + \omega^2 r_2) \ge d$. By~\eqref{eq:assumption}, we have $\wt(r_1) = d$. Substituting~\eqref{eq:colEqLength} in each equation, we obtain~\eqref{sufnec:a_1} through~\eqref{sufnec:a_3a_4a_5}. 

    The code $C$ is a Hermitian LCD code if and only if
    \begin{equation*}
        \det{G\overline{G}^T} = (r_1, r_1)_h(r_2, r_2)_h - (r_2, r_1)_h(r_1, r_2)_h \neq 0,
    \end{equation*}
    where 
    \begin{align*}
        (r_1, r_1)_h &= 1 + a_2 + a_3 + a_4 + a_5 = d, \\
        (r_1, r_2)_h &= a_3 + \omega a_5 + \omega^2 a_4 = a_3 + a_4 + \omega (a_4 + a_5), \\
        (r_2, r_1)_h &= a_3 + \omega a_4 + \omega^2 a_5 = a_3 + a_5 + \omega (a_4 + a_5), \\
        (r_2, r_2)_h &= 1 + a_1 + a_3 + a_4 + a_5.
    \end{align*}
    Here we regard $n, d, a_3, a_4, a_5$ are elements of $\FF_4$.
    Therefore,~\eqref{sufnec:a_1} through~\eqref{sufnec:LCD} hold if $C$ is a Hermitian LCD $[n, 2, d]$ code and vice versa.
\end{proof}

Given an $[n, 2, d]$ code $C(\bm{a})$, we define the following:
\begin{align}
b_i &\coloneqq (n - d) - a_i \text{ for } 1 \le i \le 5. \label{eq:b_i}
\end{align}

\begin{lem}
    \label{lem:sufnecDash}
    Let $C$ be an $[n, 2, d]$ code $C(\bm{a})$. Then $C$ is a Hermitian LCD code if and only if $C$ satisfies the following conditions:
    \begin{align}
        &b_1 = 1, \label{sufnecDash:b_1} \\
        &b_2 \ge 1, \label{sufnecDash:upperbound} \\
        &b_3, b_4, b_5 \ge 0, \nonumber \\
        &\begin{cases}
            b_3 + b_4 + b_5 + b_3b_4 + b_4b_5 + b_5b_3 \not \equiv 0 \pmod 2 & \Rm{if} d \Rm{is} \Rm{even}, \\
            b_3b_4 + b_4b_5 + b_5b_3 \not \equiv 0 \pmod 2 & \Rm{otherwise.}
        \end{cases} \nonumber
    \end{align}
\end{lem}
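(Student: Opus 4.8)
The plan is to derive Lemma~\ref{lem:sufnecDash} directly from Lemma~\ref{lem:sufnec} via the change of variables $b_i = (n-d) - a_i$ from~\eqref{eq:b_i}, so that the entire argument reduces to a substitution together with a short congruence calculation modulo $2$. Writing $a_i = (n-d) - b_i$, each linear or inequality condition of Lemma~\ref{lem:sufnec} translates at once: $a_1 = n-d-1$ becomes $b_1 = 1$; $a_2 \le n-d-1$ becomes $b_2 \ge 1$; and $a_3, a_4, a_5 \le n-d$ becomes $b_3, b_4, b_5 \ge 0$. These require no work beyond rearranging the defining relation.

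The substantive step is the parity condition~\eqref{sufnec:LCD}. Abbreviating $m \coloneqq n-d$, I would reduce everything modulo $2$, using $a_i \equiv m + b_i$ and $m^2 \equiv m \pmod 2$. Expanding a single product gives $a_i a_j \equiv m + m(b_i + b_j) + b_i b_j \pmod 2$. Summing over the three pairs $(3,4),(4,5),(5,3)$, the linear cross terms collect as $m \cdot 2(b_3 + b_4 + b_5) \equiv 0$, while the three copies of $m$ collapse to $3m \equiv m$, so that
\[
a_3a_4 + a_4a_5 + a_5a_3 \equiv m + (b_3b_4 + b_4b_5 + b_5b_3) \pmod 2,
\]
and likewise $a_3 + a_4 + a_5 \equiv m + (b_3 + b_4 + b_5) \pmod 2$.

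From here the two cases follow. When $d$ is even, adding the two congruences cancels the $2m$, showing that $a_3 + a_4 + a_5 + a_3a_4 + a_4a_5 + a_5a_3$ is congruent mod $2$ to $b_3 + b_4 + b_5 + b_3b_4 + b_4b_5 + b_5b_3$, so the condition ``$\not\equiv 0$'' transfers verbatim. When $d$ is odd, the single $m$ in the product congruence cancels against the $n-d$ on the right-hand side of the original condition $a_3a_4 + a_4a_5 + a_5a_3 \not\equiv n-d$, leaving exactly $b_3b_4 + b_4b_5 + b_5b_3 \not\equiv 0$. The only delicate point is the bookkeeping of these stray $m$-terms; once one observes that $m^2 \equiv m$ and that the linear cross terms appear in pairs and hence vanish mod $2$, the remaining verification is immediate.
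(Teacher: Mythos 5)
Your proposal is correct and follows exactly the route the paper intends: the paper's own proof is just the one-line remark that the lemma ``follows from Lemma~\ref{lem:sufnec}'' via the substitution $b_i = (n-d) - a_i$ of~\eqref{eq:b_i}, and your write-up simply makes explicit the substitution and the mod-$2$ bookkeeping (including the correct cancellation of the stray $n-d$ term in the odd case) that the paper leaves to the reader.
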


\begin{proof}
    The result follows from Lemma~\ref{lem:sufnec}.
\end{proof}

Given an $[n, 2, d]$ code $C(\bm{a})$, we define the following:
\begin{align}
\Delta &\coloneqq 4n - 5d. \label{eq:DeltaDef}
\end{align}

\begin{lem}
    \label{lem:delta}
    Let $C$ be a Hermitian LCD $[n, 2, d]$ code. Then $C$ is optimal if and only if the value of $\Delta$ with respect to $n$ is given as follows:
    \begin{equation}
        \Delta = \begin{cases}
            5 &\Rm{if\ } n \equiv 0 \pmod 5, \\
            4 &\Rm{if\ } n \equiv 1 \pmod 5, \\
            3 &\Rm{if\ } n \equiv 2 \pmod 5, \\
            2 &\Rm{if\ } n \equiv 3 \pmod 5, \\
            6 &\Rm{if\ } n \equiv 4 \pmod 5. \\
        \end{cases} \nonumber
    \end{equation}
\end{lem}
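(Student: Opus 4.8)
The plan is to reduce the statement to a direct case analysis on the residue of $n$ modulo $5$, since both the optimality condition and the quantity $\Delta$ are controlled entirely by this residue. First I would record the key structural observation that, for fixed $n$, the map $d \mapsto \Delta = 4n - 5d$ is strictly decreasing and hence injective. Consequently $d$ attains its maximal admissible value (that is, $C$ is optimal) if and only if $\Delta$ equals the particular number obtained by substituting the optimal $d$ into $4n - 5d$. This turns the stated ``if and only if'' into a single evaluation of $\Delta$ at the optimal distance for each residue class, handling both directions at once.

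Next I would write $n = 5q + r$ with $r \in \{0,1,2,3,4\}$ and evaluate $\floor*{\frac{4n}{5}} = 4q + \floor*{\frac{4r}{5}}$, which yields $4q,\,4q,\,4q+1,\,4q+2,\,4q+3$ for $r = 0,1,2,3,4$ respectively. Using the bound~\eqref{eq:optDist}, the optimal distance is $\floor*{\frac{4n}{5}}$ when $r \in \{1,2,3\}$ and $\floor*{\frac{4n}{5}} - 1$ when $r \in \{0,4\}$; hence the optimal $d$ equals $4q-1,\,4q,\,4q+1,\,4q+2,\,4q+2$ for $r = 0,1,2,3,4$.

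Finally I would substitute each of these values into $\Delta = 4n - 5d = 4(5q+r) - 5d$ and simplify. The $20q$ terms cancel in every case, leaving $\Delta = 5,\,4,\,3,\,2,\,6$ for $r = 0,1,2,3,4$, which is exactly the claimed table. Combined with the injectivity of $d \mapsto \Delta$ from the first step, this establishes both implications of the equivalence simultaneously.

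I do not anticipate a genuine conceptual obstacle: the only care required is the correct evaluation of $\floor*{\frac{4r}{5}}$ in each of the five cases and keeping precise track of the $-1$ correction in~\eqref{eq:optDist} for $r \in \{0,4\}$. The cleanest presentation is probably a short table indexed by the five residues, listing $\floor*{\frac{4n}{5}}$, the optimal $d$, and the resulting $\Delta$ in parallel columns, so that the reader can verify the arithmetic at a glance.
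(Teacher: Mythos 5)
Your proposal is correct and matches the paper's approach: the paper's proof is simply ``the result follows from the definition $\Delta = 4n - 5d$,'' i.e.\ exactly the substitution of the optimal $d$ from the bound~\eqref{eq:optDist} into $4n-5d$ for each residue class, which you carry out explicitly (and your injectivity remark just makes precise why this yields both directions of the equivalence). Your case-by-case arithmetic for $r = 0,1,2,3,4$ checks out, giving $\Delta = 5, 4, 3, 2, 6$ as claimed.
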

\begin{proof}
    The result follows from~\eqref{eq:DeltaDef}.
\end{proof}

\begin{lem}
    \label{lem:deltaBound}
    Let $C$ be a code $C(\bm{a})$. If $C$ is a Hermitian LCD code, then it holds that
    \begin{equation}
        0 \le b_3, b_4, b_5 \le \Delta. \nonumber
    \end{equation}
\end{lem}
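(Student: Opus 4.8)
The plan is to split the statement into the lower bound $b_3, b_4, b_5 \ge 0$ and the upper bound $b_3, b_4, b_5 \le \Delta$, and to handle each using the characterization already in hand. The lower bound requires no new work: nonnegativity of $b_3, b_4, b_5$ is one of the conditions listed in Lemma~\ref{lem:sufnecDash}, so it holds the moment we assume that $C(\bm{a})$ is Hermitian LCD.

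For the upper bound, the key step is to evaluate the total $b_1 + b_2 + b_3 + b_4 + b_5$. Summing the defining relation \eqref{eq:b_i} over $1 \le i \le 5$ gives $\sum_{i} b_i = 5(n-d) - \sum_{i} a_i$. Since $G(\bm{a})$ has exactly $2 + a_1 + a_2 + a_3 + a_4 + a_5$ columns (recall that $a_0$ has been set to $0$) and this number equals the length $n$, we have $\sum_{i} a_i = n - 2$. Substituting this and recalling $\Delta = 4n - 5d$ from \eqref{eq:DeltaDef} yields $\sum_{i} b_i = 5(n-d) - (n-2) = \Delta + 2$.

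From here the bound follows by feeding in the remaining conditions of Lemma~\ref{lem:sufnecDash}. Using $b_1 = 1$ and $b_2 \ge 1$, I get $b_3 + b_4 + b_5 = (\Delta + 2) - b_1 - b_2 \le \Delta$. Because each of $b_3, b_4, b_5$ is nonnegative, each is bounded above by the sum $b_3 + b_4 + b_5$, hence by $\Delta$, which is exactly the claim.

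I do not anticipate a genuine obstacle: the whole argument is a short consequence of Lemma~\ref{lem:sufnecDash} together with the column count. The only point to watch is the bookkeeping—translating the number of columns of $G(\bm{a})$ correctly into $\sum_{i} a_i = n - 2$—since an off-by-one there would shift the total $\sum_{i} b_i$ and break the final inequality.
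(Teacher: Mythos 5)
Your proposal is correct and is essentially the paper's own argument: both reduce to the linear identity $b_2 + b_3 + b_4 + b_5 = \Delta + 1$ (you obtain it from the column count together with $b_1 = 1$, the paper by substituting \eqref{eq:b_i} into \eqref{eq:assumption}), and then conclude from $b_2 \ge 1$ and the nonnegativity of $b_3, b_4, b_5$ in Lemma~\ref{lem:sufnecDash}. The bookkeeping you flagged checks out, so nothing further is needed.
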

\begin{proof}
    Substituting $b_2, b_3, b_4, b_5, \Delta$ in~\eqref{eq:assumption}, we obtain
    \begin{equation}
        \label{eq:b_2}
        b_2 = \Delta + 1 - (b_3 + b_4 + b_5).
    \end{equation}
    Combining with~\eqref{sufnecDash:upperbound}, we obtain $b_3 + b_4 + b_5 \le \Delta$. Since $0 \le b_3, b_4, b_5$, it follows that $0 \le b_3, b_4, b_5 \le \Delta$.
\end{proof}

\begin{lem}
    \label{lem:equiva_3a_4a_5}
    \begin{align}
        &C(\bm{a}) \simeq C(a_1, a_2, a_5, a_3, a_4). \label{eq:equiva_3a_4a_5}
    \end{align}
\end{lem}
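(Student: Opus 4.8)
The plan is to produce $C(a_1,a_2,a_5,a_3,a_4)$ from $C(\bm a)$ by exhibiting an explicit chain of the three allowed operations recalled in Section~\ref{sec:introduction} (an elementary row operation, a column permutation, and rescaling of columns by nonzero scalars) carrying $G(\bm a)$ to $G(a_1,a_2,a_5,a_3,a_4)$. The guiding observation is that the pivots together with the $\0_{a_1}$ and $\1_{a_2}$ blocks contribute columns of ``ratio'' $\infty$ and $0$, while the $a_3,a_4,a_5$ blocks contribute the columns $(1,1)^T$, $(1,\omega)^T$, $(1,\omega^2)^T$ of ratios $1,\omega,\omega^2$; passing to the target merely cyclically relabels these three ratio-classes. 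Since multiplication by $\omega$ permutes $\FF_4^{*}=\{1,\omega,\omega^2\}$ as a $3$-cycle, a single row scaling should realize the relabelling.

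Concretely, the key step is to scale the second row of $G(\bm a)$ by $\omega$. I would then check block by block, using $\omega\cdot\omega=\omega^2$ and $\omega\cdot\omega^2=1$, that the second row becomes $(0,\ \omega,\ \omega\1_{a_1},\ \0_{a_2},\ \omega\1_{a_3},\ \omega^2\1_{a_4},\ \1_{a_5})$. Thus the $a_3$, $a_4$, $a_5$ blocks now consist of the columns $(1,\omega)^T$, $(1,\omega^2)^T$, $(1,1)^T$ respectively, the first pivot and the $\1_{a_2}$ block are unchanged, while the second pivot column and each column of the $a_1$ block become $(0,\omega)^T$.

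Next I would rescale the second pivot column and every column of the $a_1$ block by $\omega^2=\omega^{-1}$, restoring them to $(0,1)^T$; no other column needs rescaling, since $(1,1)^T,(1,\omega)^T,(1,\omega^2)^T$ are already standard representatives. Finally, permuting the columns so that the $a_5$ copies of $(1,1)^T$ precede the $a_3$ copies of $(1,\omega)^T$, which precede the $a_4$ copies of $(1,\omega^2)^T$, yields exactly $G(a_1,a_2,a_5,a_3,a_4)$. By the equivalence criterion this gives $C(\bm a)\simeq C(a_1,a_2,a_5,a_3,a_4)$.

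I expect no genuine obstacle here; the only care needed is the bookkeeping, namely tracking how each of the five column types transforms under the row scaling and confirming that after the compensating column rescalings and the permutation the resulting blocks match those of $G(a_1,a_2,a_5,a_3,a_4)$. Conceptually, the statement is simply the action of the diagonal element $\mathrm{diag}(1,\omega)$ of $\mathrm{PGL}_2(\FF_4)$, which fixes the points $(1:0)$ and $(0:1)$ and acts as a $3$-cycle on the remaining three points of $\mathrm{PG}(1,\FF_4)$.
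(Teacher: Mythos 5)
Your proof is correct and matches the paper's own argument exactly: multiply the second row of $G(\bm{a})$ by $\omega$, rescale the second pivot column and the $a_1$-block columns back to have entry $1$ (the paper's remark that a column whose other entry is $0$ may be normalized), and permute the columns. You have simply carried out the block-by-block bookkeeping that the paper leaves implicit.
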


\begin{proof}
    Multiply the second row of $G(\bm{a})$ by $\omega$. Permuting the columns, the result follows. Note that we may assume that the nonzero entry of a column is $1$, provided that the entry of the other column is $0$.
\end{proof}

By Lemma~\ref{lem:equiva_3a_4a_5}, we may assume $a_3 \ge a_4, a_5$. Notice that $a_3 \ge a_4, a_5$ if and only if $b_3 \le b_4, b_5$ by~\eqref{eq:b_i}.

\section{Optimal Hermitian LCD Codes of Dimension $2$}
\label{sec:optimal}

By Lemmas \ref{lem:sufnecDash} through \ref{lem:equiva_3a_4a_5}, it
suffices to calculate all $b_3, b_4, b_5$ satisfying
\begin{align}
    &0 \le b_3 \le b_4, b_5 \le \Delta, \label{eq:reducedb_3b_4b_5} \\
    &\begin{cases}
        b_3 + b_4 + b_5 + b_3b_4 + b_4b_5 + b_5b_3 \neq 0 & \Rm{if} d \Rm{is} \Rm{even}, \\
        b_3b_4 + b_4b_5 + b_5b_3 \neq 0 & \Rm{otherwise,}
    \end{cases} \label{eq:reducedLCD}
\end{align}
in order to obtain optimal Hermitian LCD codes of dimension $2$, including all inequivalent codes.
Notice that $b_1, b_2$ are obtained by~\eqref{sufnecDash:b_1},~\eqref{eq:b_2} respectively.
Our computer search found all integers $b_3, b_4, b_5$ satisfying~\eqref{eq:reducedb_3b_4b_5} and~\eqref{eq:reducedLCD}.
This calculation was done by Magma~\cite{bosma1997magma}. Recall that $a_1, a_2, a_3, a_4, a_5$ are obtained from $b_1, b_2, b_3, b_4, b_5$ by~\eqref{eq:b_i}.
For optimal Hermitian LCD codes of dimension $2$, the integers $a_1, a_2, a_3, a_4, a_5$ are listed in Table~\ref{tab:allcodes}, where the rows are in lexicographical order with respect to $a_1, a_2, a_3, a_4, a_5$, and $m$ is a nonnegative integer.

\small
\begin{longtable}{llll}
\caption{Optimal Hermitian LCD codes of dimension $2$} \label{tab:allcodes} \\
$n$ & Code &  $(a_1, a_2, a_3, a_4, a_5)$ & $m$ \\
\hline \endfirsthead

$n$ & Code &  $(a_1, a_2, a_3, a_4, a_5)$ & $m$\\
\hline \endhead

\endfoot

\hline
\endlastfoot

    $n = 5m $
    & $C_{5m, 1}$ & $(m, m, m, m, m-2)$ & $m \ge 2$ \\
    & $C_{5m, 2}$ & $(m, m, m, m-2, m)$ & $m \ge 2$ \\
    & $C_{5m, 3}$ & $(m, m-1, m+1, m, m-2)$ & $m \ge 2$ \\
    & $C_{5m, 4}$ & $(m, m-1, m+1, m-2, m)$ & $m \ge 2$ \\
    & $C_{5m, 5}$ & $(m, m-1, m, m, m-1)$ & $m \ge 1$ \\
    & $C_{5m, 6}$ & $(m, m-1, m, m-1, m)$ & $m \ge 1$ \\
    & $C_{5m, 7}$ & $(m, m-2, m, m, m)$ & $m \ge 2$ \\
    & $C_{5m, 8}$ & $(m, m-3, m+1, m, m)$ & $m \ge 3$ \\
    \hline

    $n = 5m+1$
    & $C_{5m+1, 1}$ & $(m, m, m+1, m, m-2)$ & $m \ge 2$ \\
    & $C_{5m+1, 2}$ & $(m, m, m+1, m-2, m)$ & $m \ge 2$ \\
    & $C_{5m+1, 3}$ & $(m, m, m, m, m-1)$ & $m \ge 1$ \\
    & $C_{5m+1, 4}$ & $(m, m, m, m-1, m)$ & $m \ge 1$ \\
    & $C_{5m+1, 5}$ & $(m, m-1, m+1, m+1, m-2)$ & $m \ge 2$ \\
    & $C_{5m+1, 6}$ & $(m, m-1, m+1, m, m-1)$ & $m \ge 1$ \\
    & $C_{5m+1, 7}$ & $(m, m-1, m+1, m-1, m)$ & $m \ge 1$ \\
    & $C_{5m+1, 8}$ & $(m, m-1, m+1, m-2, m+1)$ & $m \ge 2$ \\
    & $C_{5m+1, 9}$ & $(m, m-2, m+1, m, m)$ & $m \ge 2$ \\
    & $C_{5m+1, 10}$ & $(m, m-3, m+1, m+1, m)$ & $m \ge 3$ \\
    & $C_{5m+1, 11}$ & $(m, m-3, m+1, m, m+1)$ & $m \ge 3$ \\
    \hline
    
    $n = 5m + 2$
    & $C_{5m+2, 1}$ & $(m, m, m, m, m)$ & $m \ge 0$ \\
    & $C_{5m+2, 2}$ & $(m, m-1, m+1, m, m)$ & $m \ge 1$ \\
    \hline

    $n = 5m + 3$
    & $C_{5m+3, 1}$ & $(m, m-1, m+1, m+1, m)$ & $m \ge 1$ \\
    & $C_{5m+3, 2}$ & $(m, m, m+1, m, m)$ & $m \ge 0$ \\
    & $C_{5m+3, 3}$ & $(m, m-1, m+1, m, m+1)$ & $m \ge 1$ \\
    \hline
    
    $n = 5m + 4$
    & $C_{5m+4, 1}$ & $(m+1, m+1, m+1, m+1, m-2)$ & $m \ge 2$ \\
    & $C_{5m+4, 2}$ & $(m+1, m+1, m+1, m, m-1)$ & $m \ge 1$ \\
    & $C_{5m+4, 3}$ & $(m+1, m+1, m+1, m-1, m)$ & $m \ge 1$ \\
    & $C_{5m+4, 4}$ & $(m+1, m+1, m+1, m-2, m+1)$ & $m \ge 2$ \\
    & $C_{5m+4, 5}$ & $(m+1, m+1, m+2, m+1, m-3)$ & $m \ge 3$ \\
    & $C_{5m+4, 6}$ & $(m+1, m+1, m+2, m-1, m-1)$ & $m \ge 1$ \\
    & $C_{5m+4, 7}$ & $(m+1, m+1, m+2, m-3, m+1)$ & $m \ge 3$ \\
    & $C_{5m+4, 8}$ & $(m+1, m, m+1, m, m)$ & $m \ge 0$ \\
    & $C_{5m+4, 9}$ & $(m+1, m, m+2, m+1, m-2)$ & $m \ge 2$ \\
    & $C_{5m+4, 10}$ & $(m+1, m, m+2, m+2, m-3)$ & $m \ge 3$ \\
    & $C_{5m+4, 11}$ & $(m+1, m, m+2, m, m-1)$ & $m \ge 1$ \\
    & $C_{5m+4, 12}$ & $(m+1, m, m+2, m-1, m)$ & $m \ge 1$ \\
    & $C_{5m+4, 13}$ & $(m+1, m, m+2, m-2, m+1)$ & $m \ge 2$ \\
    & $C_{5m+4, 14}$ & $(m+1, m, m+2, m-3, m+2)$ & $m \ge 3$ \\
    & $C_{5m+4, 15}$ & $(m+1, m-1, m+1, m+1, m)$ & $m \ge 1$ \\
    & $C_{5m+4, 16}$ & $(m+1, m-1, m+1, m, m+1)$ & $m \ge 1$ \\
    & $C_{5m+4, 17}$ & $(m+1, m-1, m+2, m+1, m-1)$ & $m \ge 1$ \\
    & $C_{5m+4, 18}$ & $(m+1, m-1, m+2, m-1, m+1)$ & $m \ge 1$ \\
    & $C_{5m+4, 19}$ & $(m+1, m-2, m+2, m+1, m)$ & $m \ge 2$ \\
    & $C_{5m+4, 20}$ & $(m+1, m-2, m+2, m+2, m-1)$ & $m \ge 2$ \\
    & $C_{5m+4, 21}$ & $(m+1, m-2, m+2, m, m+1)$ & $m \ge 2$ \\
    & $C_{5m+4, 22}$ & $(m+1, m-2, m+2, m-1, m+2)$ & $m \ge 2$ \\
    & $C_{5m+4, 23}$ & $(m+1, m-3, m+2, m+1, m+1)$ & $m \ge 3$ \\
    & $C_{5m+4, 24}$ & $(m+1, m-4, m+2, m+1, m+2)$ & $m \ge 4$ \\
    & $C_{5m+4, 25}$ & $(m+1, m-4, m+2, m+2, m+1)$ & $m \ge 4$ \\
\end{longtable}

\begin{lem}
    \label{lem:equiva_2anda_3}
    Suppose $a_3$ is a positive integer. Then
    \begin{align}
        &C(\bm{a}) \simeq C(a_1, a_3-1, a_2+1, a_5, a_4). \label{eq:equiva_2anda_3}
    \end{align}
\end{lem}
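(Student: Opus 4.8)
The plan is to follow the template of the proof of Lemma~\ref{lem:equiva_3a_4a_5}: apply a single elementary row operation to $G(\bm a)$, determine how it permutes the five column types, and then re-sort the columns into the standard form. Concretely, I would replace the second row $r_2$ by $r_1 + r_2$, which amounts to left-multiplying $G(\bm a)$ by $\left(\begin{smallmatrix} 1 & 0 \\ 1 & 1 \end{smallmatrix}\right)$; since this is an elementary row operation it does not change the code up to equivalence.

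Then I would record the image of each column type under this operation, using $1+\omega = \omega^2$ and $1+\omega^2 = \omega$ in $\FF_4$. A column $\left(\begin{smallmatrix} x \\ y \end{smallmatrix}\right)$ is sent to $\left(\begin{smallmatrix} x \\ x+y \end{smallmatrix}\right)$, so $\left(\begin{smallmatrix} 0 \\ 1 \end{smallmatrix}\right)$ is fixed, the types $\left(\begin{smallmatrix} 1 \\ 0 \end{smallmatrix}\right)$ and $\left(\begin{smallmatrix} 1 \\ 1 \end{smallmatrix}\right)$ are interchanged, and the types $\left(\begin{smallmatrix} 1 \\ \omega \end{smallmatrix}\right)$ and $\left(\begin{smallmatrix} 1 \\ \omega^2 \end{smallmatrix}\right)$ are interchanged. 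The key observation is that every column lands \emph{exactly} on the standard representative of its new type, so no rescaling of columns is required (cf. the remark in the proof of Lemma~\ref{lem:equiva_3a_4a_5}).

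Next I would count multiplicities: the $a_2+1$ columns of type $\left(\begin{smallmatrix}1\\0\end{smallmatrix}\right)$ (including the first identity column) become type $\left(\begin{smallmatrix}1\\1\end{smallmatrix}\right)$; the $a_3$ columns of type $\left(\begin{smallmatrix}1\\1\end{smallmatrix}\right)$ become type $\left(\begin{smallmatrix}1\\0\end{smallmatrix}\right)$; the $a_4$- and $a_5$-blocks swap; and the $a_1+1$ columns of type $\left(\begin{smallmatrix}0\\1\end{smallmatrix}\right)$ are unchanged. Permuting the columns back to the standard ordering and singling out one of the new $\left(\begin{smallmatrix}1\\0\end{smallmatrix}\right)$-columns as the first identity column produces precisely $G(a_1, a_3-1, a_2+1, a_5, a_4)$, which gives the asserted equivalence.

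The only step that genuinely uses a hypothesis is the final normalization: to exhibit the standard form I need at least one column of type $\left(\begin{smallmatrix}1\\0\end{smallmatrix}\right)$ to serve as the identity pivot, and the leftover count $a_3-1$ must be a valid nonnegative index. Both hold exactly because $a_3$ is assumed positive, so I expect this boundary bookkeeping to be the only subtle point; the remainder is routine $\FF_4$ arithmetic.
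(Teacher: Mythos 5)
Your proof is correct and takes essentially the same approach as the paper: a single elementary row operation on $G(\bm{a})$, tracking how the five column types permute, then re-sorting the columns into standard form, with the (correct) observation that no column rescaling is needed. In fact your choice of operation $r_2 \mapsto r_1 + r_2$ is the one that yields exactly the claimed tuple; the paper's phrasing ``add the second row to the first row,'' if read literally as $r_1 \mapsto r_1 + r_2$, would instead produce $C(a_3-1,\, a_2,\, a_1+1,\, a_5,\, a_4)$ and would also require rescaling the images of the $(1,\omega)^T$- and $(1,\omega^2)^T$-columns, so your bookkeeping is if anything the more precise version of the same argument.
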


\begin{proof}
    Add the second row of $G(\bm{a})$ to the first row. Permuting the columns, the result follows. Recall that $C(\bm{a})$ is defined in~\eqref{eq:bolda}.
\end{proof}

\begin{lem}
    \label{lem:equiva_1a_2}
    Suppose $1 + a_1 + a_3 + a_4 + a_5 = d$. Then
    \begin{align}
        &C(\bm{a}) \simeq C(a_2, a_1, a_3, a_5, a_4). \label{eq:equiva_1a_2}
    \end{align}
\end{lem}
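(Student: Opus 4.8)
The plan is to follow the template of Lemma~\ref{lem:equiva_3a_4a_5} and Lemma~\ref{lem:equiva_2anda_3}: realize the claimed equivalence by a single row operation on $G(\bm{a})$, followed by a permutation and rescaling of the columns. Here the natural operation is to interchange the first and second rows of $G(\bm{a})$. Viewing each column up to a nonzero scalar as one of the five projective points $(1,0)^T,(0,1)^T,(1,1)^T,(1,\omega)^T,(1,\omega^2)^T$, left multiplication by the swap matrix $\left(\begin{smallmatrix} 0 & 1 \\ 1 & 0 \end{smallmatrix}\right)$ induces the projective symmetry that fixes $(1,1)^T$ and interchanges $(1,0)^T\leftrightarrow(0,1)^T$ and $(1,\omega)^T\leftrightarrow(1,\omega^2)^T$, using $\omega^{-1}=\omega^2$. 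This is exactly the permutation of blocks that should carry $C(\bm{a})$ to $C(a_2,a_1,a_3,a_5,a_4)$.

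First I would carry out the interchange explicitly and read off the seven blocks of columns of the resulting matrix. The two distinguished columns $(1,0)^T,(0,1)^T$ become $(0,1)^T,(1,0)^T$; the $a_1$ block of columns $(0,1)^T$ becomes $(1,0)^T$ while the $a_2$ block of columns $(1,0)^T$ becomes $(0,1)^T$; the $a_3$ block $(1,1)^T$ is unchanged; and the $a_4$ block $(1,\omega)^T$ and the $a_5$ block $(1,\omega^2)^T$ become $(\omega,1)^T$ and $(\omega^2,1)^T$ respectively. Rescaling each full column so that its first entry equals $1$ turns $(\omega,1)^T$ into $(1,\omega^2)^T$ and $(\omega^2,1)^T$ into $(1,\omega)^T$ (again by $\omega^{-1}=\omega^2$), and we rescale the single-nonzero-entry columns as in the proof of Lemma~\ref{lem:equiva_3a_4a_5}. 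Collecting the blocks and permuting the columns into the standard order then produces precisely $G(a_2,a_1,a_3,a_5,a_4)$, whence $C(\bm{a})\simeq C(a_2,a_1,a_3,a_5,a_4)$.

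The hypothesis $1+a_1+a_3+a_4+a_5=d$ is what makes the output a legitimate representative within the normalized family: the first row of $G(a_2,a_1,a_3,a_5,a_4)$ has weight $1+a_1+a_3+a_5+a_4=1+a_1+a_3+a_4+a_5$, so the hypothesis says exactly that this row has weight $d$, i.e.\ that the target again satisfies the normalization~\eqref{eq:assumption}. Since $C(\bm{a})$ already satisfies~\eqref{eq:assumption}, the hypothesis is equivalent to $a_1=a_2$, which is the case in which both rows of $G(\bm{a})$ are minimum-weight codewords. The equivalence of the underlying codes does not itself require this condition; it enters only to keep us inside the normalized class.

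I expect the only delicate point to be the scalar bookkeeping for the $\omega$- and $\omega^2$-weighted blocks: one must verify that restoring the leading-$1$ normalization after the row interchange interchanges the $a_4$ and $a_5$ blocks (rather than fixing them or permuting them some other way), which is precisely the computation $\omega^{-1}=\omega^2$ already used in Lemma~\ref{lem:equiva_3a_4a_5}. Everything else is the routine matching of column blocks employed there, so no genuinely new difficulty arises.
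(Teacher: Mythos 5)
Your proof is correct and takes essentially the same approach as the paper: interchange the two rows of $G(\bm{a})$, then rescale and permute the columns to recover the standard form $G(a_2,a_1,a_3,a_5,a_4)$, the $\omega^{-1}=\omega^2$ rescaling being what swaps the $a_4$ and $a_5$ blocks. The paper's proof is a two-line version of this; your additional bookkeeping, including the observation that the hypothesis $1+a_1+a_3+a_4+a_5=d$ serves only to keep the resulting parameters inside the normalized class~\eqref{eq:assumption}, fills in details the paper leaves implicit.
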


\begin{proof}
    Interchange the first row and the second row of $G(\bm{a})$. Permuting the columns, the result follows.
\end{proof}

\begin{table}
    \begin{center}
        \small
        \caption{Equivalent optimal Hermitian LCD codes of dimension $2$}
        \begin{tabular}{lll} \\
            $n$ & Code \\ \hline
            $n = 5m$
            & $C_{5m, 7} \simeq_{\ref{eq:equiva_2anda_3}, \ref{eq:equiva_3a_4a_5}} C_{5m, 6} \simeq_{\ref{eq:equiva_3a_4a_5}} C_{5m, 5}$ \\
            & $C_{5m, 8} \simeq_{\ref{eq:equiva_2anda_3}, \ref{eq:equiva_3a_4a_5}, \ref{eq:equiva_2anda_3}} C_{5m, 3} \simeq_{\ref{eq:equiva_2anda_3}} C_{5m, 2} \simeq_{\ref{eq:equiva_3a_4a_5}} C_{5m, 1} \simeq_{\ref{eq:equiva_2anda_3}} C_{5m, 4}$ \\
            \hline

            $n = 5m + 1$
            & $C_{5m+1, 9} \simeq_{\ref{eq:equiva_2anda_3}, \ref{eq:equiva_3a_4a_5}, \ref{eq:equiva_2anda_3}} C_{5m+1, 6} \simeq_{\ref{eq:equiva_2anda_3}} C_{5m+1, 4} \simeq_{\ref{eq:equiva_3a_4a_5}} C_{5m+1, 3} \simeq_{\ref{eq:equiva_2anda_3}} C_{5m+1, 7}$ \\
            & $C_{5m+1, 11} \simeq_{\ref{eq:equiva_2anda_3}, \ref{eq:equiva_3a_4a_5}, \ref{eq:equiva_2anda_3}} C_{5m+1, 5} \simeq_{\ref{eq:equiva_2anda_3}, \ref{eq:equiva_3a_4a_5}} C_{5m+1, 1} \simeq_{\ref{eq:equiva_1a_2}} C_{5m+1, 2}$ \\
            & $\simeq_{\ref{eq:equiva_3a_4a_5}, \ref{eq:equiva_2anda_3}} C_{5m+1, 8} \simeq_{\ref{eq:equiva_2anda_3}, \ref{eq:equiva_3a_4a_5}, \ref{eq:equiva_2anda_3}} C_{5m+1, 10}$ \\
            \hline

            $n = 5m + 2$
            & $C_{5m+2, 1} \simeq_{\ref{eq:equiva_2anda_3}} C_{5m+2, 2}$ \\
            \hline

            $n = 5m + 3$
            & $C_{5m+3, 1} \simeq_{\ref{eq:equiva_2anda_3}, \ref{eq:equiva_3a_4a_5}} C_{5m+3, 2} \simeq_{\ref{eq:equiva_3a_4a_5}, \ref{eq:equiva_2anda_3}} C_{5m+3, 3}$ \\
            \hline

            $n = 5m + 4$
            & $C_{5m+4, 8} \simeq_{\ref{eq:equiva_3a_4a_5}, \ref{eq:equiva_2anda_3}} C_{5m+4, 16} \simeq_{\ref{eq:equiva_3a_4a_5}} C_{5m+4, 15}$ \\
            & $C_{5m+4, 6} \simeq_{\ref{eq:equiva_3a_4a_5}, \ref{eq:equiva_2anda_3}} C_{5m+4, 22} \simeq_{\ref{eq:equiva_3a_4a_5}} C_{5m+4, 20}$ \\
            & $C_{5m+4, 21} \simeq_{\ref{eq:equiva_2anda_3}, \ref{eq:equiva_3a_4a_5}, \ref{eq:equiva_2anda_3}} C_{5m+4, 17} \simeq_{\ref{eq:equiva_2anda_3}, \ref{eq:equiva_3a_4a_5}, \ref{eq:equiva_2anda_3}} C_{5m+4, 12}$ \\
            & $\simeq_{\ref{eq:equiva_2anda_3}} C_{5m+4, 2} \simeq_{\ref{eq:equiva_1a_2}} C_{5m+4, 3} \simeq_{\ref{eq:equiva_2anda_3}} C_{5m+4, 11}$ \\
            & $\simeq_{\ref{eq:equiva_2anda_3}, \ref{eq:equiva_3a_4a_5}, \ref{eq:equiva_3a_4a_5}, \ref{eq:equiva_2anda_3}} C_{5m+4, 19} \simeq_{\ref{eq:equiva_2anda_3}, \ref{eq:equiva_3a_4a_5}, \ref{eq:equiva_3a_4a_5}, \ref{eq:equiva_2anda_3}} C_{5m+4, 18}$ \\
            & $C_{5m+4, 23} \simeq_{\ref{eq:equiva_2anda_3}, \ref{eq:equiva_3a_4a_5}, \ref{eq:equiva_2anda_3}} C_{5m+4, 9}$ \\
            & $\simeq_{\ref{eq:equiva_2anda_3}} C_{5m+4, 4} \simeq_{\ref{eq:equiva_1a_2}} C_{5m+4, 1} \simeq_{\ref{eq:equiva_2anda_3}} C_{5m+4, 13}$ \\
            & $C_{5m+4, 24} \simeq_{\ref{eq:equiva_2anda_3}, \ref{eq:equiva_3a_4a_5}, \ref{eq:equiva_2anda_3}} C_{5m+4, 10} \simeq_{\ref{eq:equiva_2anda_3}, \ref{eq:equiva_3a_4a_5}} C_{5m+4, 5} \simeq_{\ref{eq:equiva_1a_2}} C_{5m+4, 7}$ \\
            & $\simeq_{\ref{eq:equiva_3a_4a_5}, \ref{eq:equiva_2anda_3}} C_{5m+4, 14} \simeq_{\ref{eq:equiva_2anda_3}, \ref{eq:equiva_3a_4a_5}, \ref{eq:equiva_2anda_3}} C_{5m+4, 25}$\\
            \hline
        \end{tabular}
        \label{tab:classificationMLarge}
    \end{center}
\end{table}

By Lemmas~\ref{lem:equiva_3a_4a_5} through~\ref{lem:equiva_1a_2}, we have the equivalences among some codes listed in Table~\ref{tab:allcodes}, which are displayed in Table~\ref{tab:classificationMLarge}. Note that $C \simeq_i C'$ denotes the two codes $C, C'$ are equivalent by $(i)$. Also, $C \simeq_{i, j} C'$ denotes that, given two codes $C, C'$, there exists a code $C''$ such that $C \simeq_i C'' \simeq_j C'$.

Table~\ref{tab:weightEnum} gives the weight enumerators of representatives in Table~\ref{tab:classificationMLarge}. The weight enumerator is given by $1 + 3y^{\wt(r_1)} + 3y^{\wt(r_2)} + 3y^{\wt(r_1 + r_2)} + 3y^{\wt(r_1 + \omega r_2)} + 3y^{\wt(r_1 + \omega^2 r_2)}$, where $r_1, r_2$ be the first and second rows of $G(\bm{a})$ respectively. Since the weight enumerators are distinct, the codes in Table~\ref{tab:weightEnum} are inequivalent.
Table~\ref{tab:classificationLength} gives the classification of optimal Hermitian LCD codes of dimension $2$, with the case where $n$ is so small that some codes in Table~\ref{tab:allcodes} do not exist.

Recall that we have assumed $a_0 = 0$ by Lemma~\ref{lem:distanceOne}. It follows from~\eqref{eq:optDist} that there exists an optimal Hermitian LCD $[n, 2]$ code $C$ such that the minimum weight of $C^{\perp h}$ equals to $1$ if and only if $n \equiv 4 \pmod 5$.
Consequently, we obtain the following theorem.

\begin{thm}
    \label{thm:allcodes}
    \begin{enumerate}[label=$(\mathrm{\roman*})$]
        \setlength \itemsep{0em}
        \item Up to equivalence, there exist two optimal Hermitian LCD $[5m, 2, 4m-1]$ codes for every integer $m$ with $m \ge 2$.
        \item Up to equivalence, there exist two optimal Hermitian LCD $[5m+1, 2, 4m]$ codes for every integer $m$ with $m \ge 2$.
        \item Up to equivalence, there exists unique optimal Hermitian LCD $[5m+2, 2, 4m+1]$ code for every integer $m$ with $m \ge 0$.
        \item Up to equivalence, there exists unique optimal Hermitian LCD $[5m+3, 2, 4m+2]$ code for every integer $m$ with $m \ge 0$.
        \item Up to equivalence, there exist six optimal Hermitian LCD $[5m+4, 2, 4m+2]$ codes for every integer $m$ with $m \ge 3$. One of them is the code such that the minimum weight of the Hermitian dual code is $1$.
    \end{enumerate}
\end{thm}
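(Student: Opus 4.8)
The plan is to assemble the theorem from four ingredients already in place: the exhaustive list of candidate codes in Table~\ref{tab:allcodes}, the equivalences among them recorded in Table~\ref{tab:classificationMLarge}, the weight-enumerator invariant in Table~\ref{tab:weightEnum}, and the separate bookkeeping for the dual-weight-$1$ code and for small~$m$.

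First I would argue \emph{completeness}: every optimal Hermitian LCD $[n,2,d]$ code with $a_0=0$ is equivalent to some code in Table~\ref{tab:allcodes}. This is immediate from the machinery above. By Proposition~\ref{prp:Cexists} any such code is equivalent to some $C(\bm a)$ normalized by~\eqref{eq:assumption}; Lemma~\ref{lem:sufnecDash} converts the LCD condition into constraints on the $b_i$; Lemma~\ref{lem:delta} pins down $\Delta$ from $n \bmod 5$ under optimality; Lemma~\ref{lem:deltaBound} confines $b_3,b_4,b_5$ to a finite box; and Lemma~\ref{lem:equiva_3a_4a_5} lets me assume $b_3 \le b_4,b_5$, i.e.\ the normalization~\eqref{eq:reducedb_3b_4b_5}. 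The solutions of~\eqref{eq:reducedb_3b_4b_5}--\eqref{eq:reducedLCD} are exactly the rows of Table~\ref{tab:allcodes} found by the Magma search, so the table is a complete list of representatives, and it remains only to decide which of them coincide up to equivalence.

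Next I would collapse the list using the three elementary moves of Lemmas~\ref{lem:equiva_3a_4a_5}, \ref{lem:equiva_2anda_3} and~\ref{lem:equiva_1a_2}: each chain displayed in Table~\ref{tab:classificationMLarge} is verified by applying these moves step by step to the tuples $(a_1,\dots,a_5)$, giving, for large $m$, two classes when $n \equiv 0,1$, one class when $n \equiv 2,3$, and five classes when $n \equiv 4 \pmod 5$. For the converse \emph{inequivalence} direction I would invoke the weight enumerator, which depends only on the multiset $\{\wt(r_1),\wt(r_2),\wt(r_1+r_2),\wt(r_1+\omega r_2),\wt(r_1+\omega^2 r_2)\}$ and is therefore an equivalence invariant; since the entries of Table~\ref{tab:weightEnum} are pairwise distinct, the chosen representatives are pairwise inequivalent, so the class count is exactly as found.

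Finally I would account for the remaining code and the boundary. Since $a_0=0$ was assumed throughout, the codes enumerated so far are precisely those whose Hermitian dual has minimum weight at least $2$; by Lemma~\ref{lem:distanceOne} and the bound~\eqref{eq:optDist}, an optimal code with dual minimum weight $1$ exists if and only if $n \equiv 4 \pmod 5$, and it is the code $C^{*}$ obtained from the unique optimal $[5m+3,2,4m+2]$ code of part~$(\mathrm{iv})$. It is inequivalent to the other five because dual minimum weight is an invariant, raising the count to six and yielding~$(\mathrm{v})$. The last task is the small-$m$ regime: for $m$ below the stated thresholds some rows of Table~\ref{tab:allcodes} fail the integrality constraints (e.g.\ $a_2 = m-3 \ge 0$) and drop out, so the class counts must be recomputed directly, which is carried out in Table~\ref{tab:classificationLength}. \textbf{The main obstacle} is precisely this boundary analysis together with confirming that the equivalence chains exhaust each class for \emph{every} admissible $m$: the large-$m$ chains are uniform, but one must verify that no two classes accidentally merge and that the weight enumerators stay distinct once the low-weight rows disappear, so that the thresholds $m \ge 2$, $m \ge 2$ and $m \ge 3$ in parts~$(\mathrm{i})$,~$(\mathrm{ii})$ and~$(\mathrm{v})$ are exactly where the generic count becomes valid.
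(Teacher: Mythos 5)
Your proposal is correct and follows essentially the same route as the paper: completeness of Table~\ref{tab:allcodes} via Proposition~\ref{prp:Cexists}, Lemmas~\ref{lem:sufnecDash}--\ref{lem:equiva_3a_4a_5} and the Magma search, reduction by the equivalence moves of Lemmas~\ref{lem:equiva_3a_4a_5}--\ref{lem:equiva_1a_2} (Table~\ref{tab:classificationMLarge}), inequivalence via the distinct weight enumerators (Table~\ref{tab:weightEnum}), the dual-minimum-weight-$1$ code for $n \equiv 4 \pmod 5$ via Lemma~\ref{lem:distanceOne} and~\eqref{eq:optDist}, and the small-$m$ recount (Table~\ref{tab:classificationLength}). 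Your explicit remarks that dual minimum weight is an equivalence invariant and that the thresholds $m \ge 2$, $m \ge 2$, $m \ge 3$ mark where the generic count stabilizes only make explicit what the paper leaves implicit.
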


\begin{table}
    \begin{center}
        \caption{Weight enumerators of representatives}
        \small
        \begin{tabular}{lll} \\
            $n$ & Code & Weight Enumerator \\ \hline

            $n = 5m$
            & $C_{5m, 7}$ & $1 + 3y^{4m-1} + 9y^{4m} + 3y^{4m+1}$ \\
            & $C_{5m, 8}$ & $1 + 6y^{4m-1} + 6y^{4m} + 3y^{4m+2}$ \\
            \hline

            $n = 5m + 1$
            & $C_{5m+1, 9}$ & $1 + 6y^{4m} + 6y^{4m+1} + 3y^{4m+2}$ \\
            & $C_{5m+1, 1}$ & $1 + 9y^{4m} + 3y^{4m+1} + 3y^{4m+3}$ \\
            \hline

            $n = 5m + 2$
            & $C_{5m+2, 1}$ & $1 + 6y^{4m+1} + 6y^{4m+2} + 3y^{4m+3}$ \\
            \hline

            $n = 5m + 3$
            & $C_{5m+3, 1}$ & $1 + 9y^{4m+2} + 6y^{4m+3}$ \\ \hline

            $n = 5m + 4$
            & $C_{5m+4, 8}$ & $1 + 3y^{4m+2} + 6y^{4m+3} + 6y^{4m+4}$ \\
            & $C_{5m+4, 6}$ & $1 + 9y^{4m+2} + 6y^{4m+5}$ \\
            & $C_{5m+4, 21}$ & $1 + 6y^{4m+2} + 3y^{4m+3} + 3y^{4m+4} + 3y^{4m+5}$ \\
            & $C_{5m+4, 23}$ & $1 + 6y^{4m+2} + 6y^{4m+3} + 3y^{4m+6}$ \\
            & $C_{5m+4, 24}$ & $1 + 9y^{4m+2} + 3y^{4m+3} + 3y^{4m+7}$ \\
            \hline
        \end{tabular}
        \label{tab:weightEnum}
    \end{center}
\end{table}

\begin{table}
    \begin{center}
        \caption{Classification of optimal Hermitian LCD codes of dimension $2$}
        \small
        \begin{tabular}{lll} \\
            $n$ & $m$ & Code \\ \hline

            $n = 5m$
            & $m = 1$ & $C_{5m, 7}$ \\
            & $m \ge 2$ & $C_{5m, 7}, C_{5m, 8}$ \\
            \hline

            $n = 5m + 1$
            & $m = 1$ & $C_{5m+1, 9}$ \\
            & $m \ge 2$ & $C_{5m+1, 9}, C_{5m, 1}$ \\
            \hline

            $n = 5m + 2$
            & $m \ge 0$ & $C_{5m+2, 1}$ \\
            \hline

            $n = 5m + 3$
            & $m \ge 0$ & $C_{5m+3, 1}$ \\
            \hline

            $n = 5m + 4$
            & $m = 0$ & $C_{5m+4, 8}$ \\
            & $m = 1$ & $C_{5m+4, 8}, C_{5m+4, 6}, C_{5m+4, 21}$ \\
            & $m = 2$ & $C_{5m+4, 8}, C_{5m+4, 6}, C_{5m+4, 21}, C_{5m+4, 23}$ \\
            & $m \ge 3$ & $C_{5m+4, 8}, C_{5m+4, 6}, C_{5m+4, 21}, C_{5m+4, 23}, C_{5m+4, 24}$ \\
            \hline
        \end{tabular}
        \label{tab:classificationLength}
    \end{center}
\end{table}

\section*{Acknowledgements}
The author would like to thank supervisor Professor Masaaki Harada for introducing the problem, useful discussions and his encouragement.

\end{document}